\newtheorem{thm}{Theorem}[section]
\newtheorem{lem}{Lemma}[section]
\newtheorem{remark}{Remark}[section]
\newtheorem{coro}{Corollary}[section]
\numberwithin{equation}{section}
\numberwithin{figure}{subsection}
\title{Uniformly stable rectangular elements for fourth order elliptic singular perturbation problems
\thanks{This work was supported in part by the  Natural Science Foundation of China (10771150),
the National Basic Research Program of China (2005CB321701),
and the Program for New Century Excellent Talents in University (NCET-07-0584) }}
\author{Li Wang \thanks{Email: liwscu@yahoo.cn }\hspace{15mm}Xiaoping Xie
\thanks{Corresponding author. Email:xpxie@scu.edu.cn}\\
 { \small School of Mathematics, Sichuan University, Chengdu 610064,  China}
}
\date{}
\begin{document}
\maketitle \pagestyle{plain}

\begin{center}

\begin{abstract}
This paper analyzes rectangular finite element methods for fourth order elliptic singular
perturbation problems.  We show that the non-$C^0$ rectangular Morley element is uniformly
convergent in the energy norm with respect to the perturbation parameter.  We also propose a
$C^0$ extended high order rectangular Morley element and prove the uniform convergence.
Finally, we do some numerical experiments to confirm the theoretical results.
par
\vspace{0.5cm}
\par
{\textbf{Key words}}:\,\,\, Finite element method,\ \ \ fourth order elliptic problem,\ \ \ singular
perturbation,\ \
\ rectangular Morley element,\ \ \ Uniformly stable

\end{abstract}
\end{center}

\section{Introduction}

Let $\Omega \subset R^2 $ be a bounded polygonal domain with boundary $\partial \Omega$. We
consider the following elliptic singular perturbation model:
\begin{equation}\label{pde}
\left\{
\begin{array}{rcc}
\begin{aligned}
\epsilon^2\Delta^2{u}-\Delta{u}&=f   \qquad \text{in} \quad\Omega \\
u=0,\quad \frac{\partial u}{\partial n}&=0 \qquad \text{on} \:\:\partial\Omega \\
\end{aligned}
\end{array}
\right.
\end{equation}
Here $f\in L^2(\Omega)$,  $\Delta$ is the standard  Laplace operator,  $\partial / \partial n $ denotes the normal
derivative on $\partial \Omega$,\, $n=(n_1,n_2)^T$ is the unit outer normal vector of $\partial
\Omega$, and $\epsilon$ is a real parameter such that 0 $< \epsilon \leqslant$ 1. It is obvious
that the equation degenerates to Poisson's equation when $\epsilon$ tends to zero.

For fourth-order elliptic problems, conforming finite element methods require $C^1$ continuity.
This usually leads to complicated element construction(see, e.g. \cite{Ciarlet1978}). In order
to overcome the $C^1$ difficulty, nonconforming finite element methods are often preferred.
Among the existing nonconforming  elements, the triangular  Morley element is the simplest one
\cite{Morley}. For the convergence analysis of this element, one can see \cite{Ciarlet1978, LL,
Shi, Strang}. But, as shown in \cite{NTR,Wang2001}, the Morley method is not uniformly
convergent with respect to the perturbation  parameter $\epsilon$ for the singular perturbation
problem (\ref{pde}). It  diverges for Poisson's equations, i.e. the limit problem when
$\epsilon$ tends to zero.

In  \cite{NTR}, Nilssen, TAI  and Winther proposed a $C^0$ nonconforming  triangular element.
It has 9 degrees in each element and the function space contains complete polynomials of degree
2. Uniform  convergent rate in $\epsilon$ was deduced for the problem (\ref{pde}). Chen, Zhao
and Shi  presented and analyzed in  \cite{Chen;Zhao;Shi2005} a nine parameter triangular
element and a twelve parameter rectangular element with double set parameters. In
\cite{Wang;Xu;Hu2006}, Wang, Xu  and Hu  derived a modified   Morley element method. This
method  uses the triangular  Morley element or rectangular Morley element, but the linear or
bilinear approximation of finite element functions is used in the lower part of the bilinear
form.  It was shown that the modified scheme converges  uniformly in $\epsilon$.

In this paper we focus on the analysis of uniformly stable rectangular elements for the problem
(\ref{pde}). In Section 2 we introduce some notations and the weak formulations.  In Section 3
we discuss   general conditions for the construction of uniformly convergent nonconforming
finite elements. We show that  the rectangular Morley element is  uniformly convergent in
Section 3.  We also propose   a new uniformly  convergent $C^0$ higher order rectangular
element in Section 4.  Finally  we give some numerical examples in Section 5.

\section{Weak formulations}
 We first introduce some notations. For a non-negative integer
$m$, let $H^m( \Omega)$ denote  the usual Sobolev space  with norm $\| \cdot \|_m$ and
semi-norm $ | \cdot |_m$.    $ H^m_0( \Omega)$ denotes the closure of $C_0^\infty$ in $
H^m(\Omega)$. We have
$$ H^1_0( \Omega)=\{v \in H^1: v=0 \,\,on\,\,\partial \Omega \},$$
$$ H^2_0( \Omega)=\{v \in H^2\cap H^1_0: \frac{\partial v}{\partial n}=0 \,\,on\,\,\partial \Omega \}.$$
$H^{-m}( \Omega)$  denotes the dual space of $H^m_0( \Omega)$, and   $ L^2(\Omega)=H^0(\Omega)$
is the space of square-integrable functions with  the inner product $(\cdot \, ,\cdot )$.

We also use the notation $\| \cdot \|_{m,K}$ (or   $ | \cdot |_m$)   to indicate that the norm (or semi-norm)
is defined with respect to a domain $K$.

Let $\mathcal {T}_h$ be a shape regular triangulation of the domain $\Omega$ with the mesh
parameter $h=\max_{T\in \mathcal {T}_h}\{\mbox{diameter of $T$}\}$.  Let $e$ denote any edge of
an element and $\mathcal {E}_h$ be the set of all interior edges in $\mathcal {T}_h$.  We use
$[v]$ to denote the jump of  a function $v$ across an interior edge $e$, and $[v]=v$ when $e
\subset \partial \Omega$.  We denote by $P_k$    the set of polynomials of degree $\leq k$,
and by   $Q_k$  the set of polynomials of degree $\leq k$ in each variable.

For simplicity, we use $X \lesssim (\gtrsim ) Y$ to denote that there exists a constant $C$,
independent of the mesh size $h$ and the perturbation parameter $\epsilon$, such that $X\leq\!(\geq)CY$.

Let  $D^2u$ denote  the 2$\times$2-tensor of second order partials of $u$ with
$(D^2u)_{i,j}=\partial^2u/\partial x_i\partial x_j$, with  the scalar product of tensors
defined by $$D^2u:D^2v=\sum_{i,j=1}^2 \frac{\partial^2u}{\partial x_i\partial x_j}
\frac{\partial^2v}{\partial x_i\partial x_j}.$$ It is easy to verify that
$$ \int_\Omega D^2u:D^2v =\int_\Omega \Delta u \,\Delta v,\ \ \forall u,v\in H^2_0(\Omega).$$
Define the bilinear forms
$$ a(u,v)=\int_\Omega D^2u:D^2v ,\ \ \ \ b(u,v)=\int_\Omega \nabla u \cdot \nabla v .$$
Then the weak form of problem (\ref{pde}) reads as : Given $f\in L^2(\Omega)$, find $u\in H^2_0( \Omega)$
such that
\begin{equation}\label{continuous weak formula}
\epsilon^2a(u,v)+b(u,v)=(f,v),\, \, \forall v\in H^2_0( \Omega).
\end{equation}
We also define an energy norm on $H^2(\Omega)$ relative to the parameter $\epsilon$ as follows:
$$\|u\|_\epsilon=\sqrt{\epsilon^2a(u,u)+b(u,u)}.$$

Let $V_h \not\subset H^2_0( \Omega)$ be a finite-dimensional space. Define the bilinear forms on
$V_h$ by
$$a_h(u_h,v_h)=\sum \limits_{T \in \mathcal {T}_h} \int_T D^2u_h:D^2v_h, \ \ \
b_h(u_h,v_h)= \sum \limits_{T \in \mathcal {T}_h} \int_T \nabla u_h \cdot \nabla v_h.$$ The
discrete weak formulation corresponding to the problem (\ref{continuous weak formula}) reads
as:

Find $u_h \in V_h$ such that
\begin{equation}\label{discrete weak formula}
\epsilon^2 a_h(u_h,v_h)+b_h(u_h,v_h)=(f,   v_h) \quad \forall v_h \in V_{h}.
\end{equation}

\begin{remark}\label{modified}
In the case that $f\in H^{-1}(\Omega)$ and $V_h \not\subset H^1_0( \Omega)$,  one can use the
following modified scheme to replace (\ref{discrete weak formula}):
\begin{equation}\label{discrete weak formula2}
\epsilon^2 a_h(u_h,v_h)+b_h(u_h,v_h)=(f, \emph{$\Pi$} v_h) \quad \forall v_h \in V_{h}.
\end{equation}
Here $  \Pi:V_h\longrightarrow V_h\bigcap H^1_0( \Omega)$ is an operator which preserves linear
polynomials locally. The error analysis is almost the same as that for the scheme (\ref{discrete weak formula}).
\end{remark}

Similar to the continuous level,  we define the discrete norm on $V_h$ as
$$\|u\|_{\epsilon,h}=\sqrt{\epsilon^2a_h(u,u)+b_h(u,u)}.$$
%

\section{Assumptions for element construction and general convergence results}

Let us first make some assumptions on the finite element space $V_h$.
$$\forall\:T\in \mathcal {T}_h,  P_2(T) \subset V_h|_T, \mbox{ and } I_h|_T\:v= v,\  \forall v\in P_2(T), \leqno(H1)$$
where $I_h$ is the interpolate operator  associated with $V_h$;
$$\begin{array}{l}
\forall \:v_h \in V_h, \ v_h  \mbox{ is continuous at the vertices of elements}\\
\hskip2cm\mbox{ and is zero at
the  vertices  on } \partial\Omega;\end{array}
\leqno(H2)
$$
$$\begin{array}{l}
\forall \:v_h \in V_h,\ \int_e \frac{\partial v_h}{\partial n}\, ds  \mbox{  is continuous
across the element edge } e\\
\hskip2cm\mbox{ and is zero on }e \subset \partial\Omega ;\end{array}
\leqno(H3)
$$

From (H1) and (H2) we easily know that there exists an interpolation operator
$ \Pi:V_h\longrightarrow V_h\bigcap H^1_0( \Omega)$  such that
\begin{equation}\label{interpolation}
\Pi|_Tv=v,\ \ \ \forall v\in P_1(T),\ \  \forall T\in \mathcal {T}_h.
\end{equation}
In fact,  $\Pi$ can be taken as the interpolation operator corresponding to the continuous linear element
when $T$ is a triangle or to the continuous bilinear element when $T$ is a rectangle. Especially,
when $V_h\subset H^1_0(\Omega)$, we can take
\begin{equation}\label{identity}
\Pi={\bf I},
\end{equation}
where ${\bf I}:V_h\longrightarrow V_h$ is the identity operator.

If $V_h\not\subset H^1_0(\Omega)$,  we further assume that
$$ \int_T \nabla(v_h-\Pi v_h)=0, \ \forall \:v_h \in V_h, \ \  \forall T\in \mathcal {T}_h.  \leqno(H4)$$


The assumption (H2) ensures that $\|\cdot\|_{\epsilon, h}$ is a norm on $V_h$. This guarantees
the existence and uniqueness of the solution, $u_h\in V_h$, to the problem (\ref{discrete weak formula}).

To estimate the error $u-u_h$ in the energy norm, we need
the second Strang lemma (see \cite{Ciarlet1978}, Theorem 4.2.2):
\begin{lem}\label{strang lemma}
Let $u$ and $u_h$ be the  solutions to the problems (\ref{continuous weak formula}) and (\ref{discrete
weak formula}) respectively. Then it holds 
\begin{eqnarray}\label{second Strang lemma 1}
\|u-u_h\|_{\epsilon,h}\lesssim \inf \limits_{v_h \in V_{h}}\|u-v_h\|_ {\epsilon,h}+\sup
\limits_{w_h \in V_h,\:w_h\neq0} \frac{E_{\epsilon,h}(u,w_h)}{\|w_h\|_ {\epsilon,h}}
\end{eqnarray}
where
\begin{eqnarray*}
\begin{aligned}
E_{\epsilon,h}(u,w_h)&=\epsilon^2a_h(u,w_h)+b_h(u,w_h)-(f, w_h)
\end{aligned}
\end{eqnarray*}
\end{lem}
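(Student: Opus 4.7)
The plan is to follow the classical second Strang argument, adapted to the $\epsilon$-dependent energy norm. The key observation is that the bilinear form $\epsilon^2 a_h(\cdot,\cdot)+b_h(\cdot,\cdot)$ is exactly the inner product whose induced norm is $\|\cdot\|_{\epsilon,h}$, so coercivity on $V_h$ holds trivially with constant $1$, uniformly in $\epsilon$. The only nontrivial prerequisite, namely that $\|\cdot\|_{\epsilon,h}$ actually defines a norm on $V_h$, is guaranteed by assumption (H2) together with the fact that the vanishing of $|\cdot|_1$ element-by-element plus continuity at vertices (and zero boundary vertex values) forces the function to vanish; this is the point I would cite but not re-derive.

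The main chain of inequalities I would carry out goes as follows. Pick an arbitrary $v_h\in V_h$ and set $w_h:=u_h-v_h\in V_h$. By the definition of the discrete norm,
\begin{equation*}
\|u_h-v_h\|_{\epsilon,h}^2 = \epsilon^2 a_h(u_h,w_h)+b_h(u_h,w_h) - \epsilon^2 a_h(v_h,w_h)-b_h(v_h,w_h).
\end{equation*}
Using the discrete equation (\ref{discrete weak formula}) to replace the first two terms by $(f,w_h)$, then inserting $\pm u$ in the last two, I can rewrite this as
\begin{equation*}
\|u_h-v_h\|_{\epsilon,h}^2 = -E_{\epsilon,h}(u,w_h) + \epsilon^2 a_h(u-v_h,w_h)+b_h(u-v_h,w_h).
\end{equation*}
A Cauchy--Schwarz inequality applied to the pair $(a_h,b_h)$ viewed as two inner products (or equivalently to the combined inner product inducing $\|\cdot\|_{\epsilon,h}$) then yields
\begin{equation*}
|\epsilon^2 a_h(u-v_h,w_h)+b_h(u-v_h,w_h)| \le \|u-v_h\|_{\epsilon,h}\,\|w_h\|_{\epsilon,h}.
\end{equation*}

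After dividing through by $\|w_h\|_{\epsilon,h}$ (the case $w_h=0$ being trivial) I obtain
\begin{equation*}
\|u_h-v_h\|_{\epsilon,h} \le \|u-v_h\|_{\epsilon,h} + \sup_{w_h\in V_h,\, w_h\neq 0}\frac{|E_{\epsilon,h}(u,w_h)|}{\|w_h\|_{\epsilon,h}}.
\end{equation*}
A final triangle inequality $\|u-u_h\|_{\epsilon,h}\le \|u-v_h\|_{\epsilon,h}+\|u_h-v_h\|_{\epsilon,h}$ followed by taking the infimum over $v_h\in V_h$ delivers the claimed estimate, with an overall constant of $2$ absorbed into the $\lesssim$.

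I do not anticipate any real obstacle: every step is algebraic manipulation plus Cauchy--Schwarz, and the uniformity in $\epsilon$ comes for free because the energy norm is precisely the one induced by the discrete form. The only place that must be handled carefully when applying this lemma later is the consistency term $E_{\epsilon,h}(u,w_h)$, but that bookkeeping is outside the scope of this statement; here all that is needed is that it appears cleanly in the bound.
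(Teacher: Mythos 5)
Your argument is correct and is precisely the standard proof of the second Strang lemma, which the paper does not reproduce but simply cites (Ciarlet, Theorem 4.2.2); the adaptation to the $\epsilon$-dependent form is handled properly, since $\epsilon^2 a_h+b_h$ is exactly the (semi-)inner product inducing $\|\cdot\|_{\epsilon,h}$, so coercivity and Cauchy--Schwarz hold with constants independent of $\epsilon$. No gaps.
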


From the assumption (H1) and the standard  interpolation theory, we can have
\begin{equation}\label{interpolation error}
\sum \limits_{T \in \mathcal{T}_h}\|v-I_hv\|_{j,T}\lesssim h^{k-j}|v|_k \quad \forall v \in
H^2_0\cap H ^k, \quad j=0,1,2,\ \  k=2,3.
\end{equation}
Thus  the   approximation error term, i.e. the first term on the right of (\ref{second Strang lemma 1}),
can be bounded as
\begin{equation}\label{approximation error}
 \inf \limits_{v_h \in V_{h}}\|u-v_h\|_ {\epsilon,h}\lesssim \|u-I_hu\|_{\epsilon,h}\lesssim\left\{\begin{array}{l}
 h(|u|_2+\epsilon|u|_3)\\
 h(h+\epsilon)|u|_3.\end{array} \right.
\end{equation}

As for the  consistency error term $E_{\epsilon,h}(u,w_h)$,   we can   express it as
\begin{eqnarray}\label{consist1}
 \begin{aligned}
   E_{\epsilon,h}(u,w_h)&=\sum \limits_{T \in \mathcal {T}_h} \int_T (\epsilon^2 D^2u:D^2w_h+
    \nabla u \cdot \nabla w_h)-\int_\Omega ( \epsilon^2\triangle^2u-\triangle u) \Pi w_h\\
    &\hskip2cm +\int_\Omega f\ ( \Pi w_h-w_h)\\
    &=\epsilon^2 \left(\sum \limits_{T \in \mathcal {T}_h} \int_T D^2u:D^2w_h-
    \int_\Omega  \triangle^2u \:\Pi w_h\right)\\
    &\ \ +\left(\sum_{T \in \mathcal {T}_h}\int_T \nabla u \cdot \nabla w_h+\int_\Omega \triangle u \:\Pi w_h\right)+\int_\Omega f\ ( \Pi w_h-w_h)\\
\end{aligned}
\end{eqnarray}
From Green's formula and the fact $\Pi w_h|_{\partial \Omega}=0$, we have
\begin{eqnarray*}
\begin{aligned}
\int_T D^2u:D^2w_h&=\int_T \Delta u \Delta
w_h+\int_T (2\partial_{12}u\partial_{12}w_h-\partial_{11}u\partial_{22}w_h-\partial_{22}u\partial_{11}w_h)\\
&=\int_T \Delta u \Delta
w_h+\int_{\partial T}(-\frac{\partial^2 u}{\partial^2 s}\frac{\partial w_h}{\partial
n}+\frac{\partial^2 u}{\partial n \partial s}\frac{\partial w_h}{\partial s})\\
&=\int_{\partial T}\Delta u\frac{\partial w_h}{\partial n} -
\int_T \nabla(\Delta u)\cdot \nabla w_h+\int_{\partial T}(-\frac{\partial^2 u}{\partial^2 s}\frac{\partial w_h}{\partial
n}+\frac{\partial^2 u}{\partial n \partial s}\frac{\partial w_h}{\partial s}),
\end{aligned}
\end{eqnarray*}
$$\int_\Omega \Delta^2 u \Pi w_h=\int_{\partial \Omega}\frac{\partial
(\Delta u)}{\partial n}\Pi w_h - \int_{\Omega} \nabla(\Delta u)\cdot \nabla \Pi w_h=- \int_{\Omega} \nabla(\Delta u)\cdot \nabla \Pi w_h,$$
and
$$ \int_\Omega \triangle u \:\Pi w_h=-\int_\Omega \nabla u\cdot \nabla\Pi w_h+\int_{\partial \Omega}\frac{\partial
 u}{\partial n}\Pi w_h=-\sum_{T \in \mathcal {T}_h}\int_T \nabla u \cdot \nabla \Pi w_h,$$
where in the first equality $s$
denotes the unit tangential vector along $\partial T$.
 The above three relations,  together with (\ref{consist1}), imply
\begin{eqnarray}\label{consist}
 \begin{aligned}
   E_{\epsilon,h}(u,w_h)&=\epsilon^2 \sum_{T \in \mathcal {T}_h} \int_{\partial T}
    \{(\Delta u-\frac{\partial^2 u}{\partial^2 s}) \frac{\partial w_h}{\partial n}+
    \frac{\partial^2 u}{\partial n \partial s}\frac{\partial w_h}{\partial s}\} \\
    &\hskip5mm+\epsilon^2 \sum_{T \in \mathcal {T}_h}\int_T \nabla(\Delta u)\cdot \nabla(\Pi w_h- w_h) \\
    &\hskip5mm+\sum_{T \in \mathcal {T}_h} \int_T \nabla u \cdot \nabla(w_h-\Pi w_h)+\int_\Omega f\ ( \Pi w_h-w_h)\\
    &=:J_1+J_2+J_3+J_4,
\end{aligned}
\end{eqnarray}

\begin{remark}\label{J234}
When $V_h \subset H^1_0( \Omega)$,  from (\ref{identity}) we have $\Pi w_h=w_h$, which implies
$J_2= J_3=J_4=0.$
Then the consistency term is reduced to
\begin{eqnarray}\label{consistency}
\begin{aligned}
E_{\epsilon,h}(u,w_h)&=J_1=\epsilon^2 \sum_{T \in \mathcal {T}_h} \int_{\partial T}\{(\Delta
u-\frac{\partial^2 u}{\partial^2 s}) \frac{\partial w_h}{\partial n}+\frac{\partial^2 u}{\partial n
\partial s}\frac{\partial w_h}{\partial s}\}d s.
\end{aligned}
\end{eqnarray}
\end{remark}

For the consistency error term $E_{\epsilon,h}(u,w_h)$, we have the following conclusion:

\begin{lem}\label{consistencylemma}
Under the conditions of Lemma \ref{strang lemma} and the assumptions (H1)-(H4),  it holds
\begin{eqnarray}\label{consistencyinequality1}
E_{\epsilon,h}(u,w_h) \lesssim h(|u|_2+\epsilon|u|_3+||f||_0)\|w_h\|_{\epsilon,h}
\end{eqnarray}
Furthermore, if $V_h \subset H_0^{1}(\Omega)$, then
\begin{eqnarray}\label{consistencyinequality2}
E_{\epsilon,h}(u,w_h)\lesssim \left\{\begin{array}{l}
h\epsilon |u|_3\|w_h\|_{\epsilon,h},\\\\
h^{1/2}\epsilon |u|_2^{1/2} |u|_3^{1/2}\|w_h\|_{\epsilon,h}.
\end{array}\right.
\end{eqnarray}
\end{lem}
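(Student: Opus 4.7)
The plan is to work from the decomposition $E_{\epsilon,h}(u,w_h)=J_1+J_2+J_3+J_4$ given in (\ref{consist}) and bound each piece individually under (H1)--(H4); summing yields (\ref{consistencyinequality1}). For (\ref{consistencyinequality2}) I invoke Remark \ref{J234}, which shows $J_2=J_3=J_4=0$ whenever $V_h\subset H_0^1(\Omega)$, so both estimates reduce to producing two complementary bounds on $J_1$ alone.

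The core piece is $J_1$. The key is hypothesis (H3): on each interior edge $e$ the jump $[\partial w_h/\partial n]$ has zero mean, and it vanishes entirely on $e\subset\partial\Omega$. Rewriting the element-by-element boundary sum in $J_1$ as a sum over edges, I can therefore replace the $u$-derivative factors by their deviation from an edge constant at no cost. A trace-plus-Poincar\'e argument, specifically $\|g-\pi_e g\|_{0,e}\lesssim h^{1/2}|g|_{1,T}$ for the $L^2$-edge projection $\pi_e$ onto constants, applied to the appropriate second derivatives of $u$ together with a standard trace bound on $\|[\partial w_h/\partial n]\|_{0,e}$, yields $J_1\lesssim \epsilon^2 h|u|_3|w_h|_{2,h}\lesssim \epsilon h|u|_3\|w_h\|_{\epsilon,h}$, which covers the first half of (\ref{consistencyinequality2}). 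A second, $|u|_2$-based bound obtained by estimating $\|D^2u\|_{0,e}$ directly (no derivative used on $u$) and pairing with the zero-mean Poincar\'e bound $\|[\partial w_h/\partial n]\|_{0,e}\lesssim h^{1/2}|w_h|_{2,T\cup T'}$ on the $w$-side gives $J_1\lesssim \epsilon|u|_2\|w_h\|_{\epsilon,h}$; taking the geometric mean of these two delivers the $h^{1/2}\epsilon|u|_2^{1/2}|u|_3^{1/2}$ estimate.

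For the nonconforming pieces I exploit that the interpolation operator $\Pi$ of (\ref{interpolation}) preserves linear polynomials, so that Bramble--Hilbert and scaling give $\|w_h-\Pi w_h\|_{0,T}\lesssim h|w_h|_{1,T}$ and $\|\nabla(w_h-\Pi w_h)\|_{0,T}\lesssim h|w_h|_{2,T}$. These immediately yield $J_4\lesssim h\|f\|_0\|w_h\|_{\epsilon,h}$ and, after Cauchy--Schwarz against $\nabla(\Delta u)$, $J_2\lesssim \epsilon^2 h|u|_3|w_h|_{2,h}\leq \epsilon h|u|_3\|w_h\|_{\epsilon,h}$. For $J_3$ an extra $h$ on the $u$-side is needed and this is exactly where (H4) enters: $\int_T\nabla(w_h-\Pi w_h)=0$ lets me subtract the element mean $\overline{\nabla u}_T$ from $\nabla u$, after which Poincar\'e gives $\|\nabla u-\overline{\nabla u}_T\|_{0,T}\lesssim h|u|_{2,T}$ and hence $J_3\lesssim h|u|_2\|w_h\|_{\epsilon,h}$. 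The main obstacle I anticipate is the $J_1$ analysis: the subtracted edge constants must be chosen carefully so that the $h^{-1/2}$ supplied by boundary traces cancels against the $h^{1/2}$ supplied by (H3), and the two distinct bounds needed for the geometric-mean step in (\ref{consistencyinequality2}) must emerge from the same expression under different optimal subtractions.
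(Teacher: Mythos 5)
Your proposal follows the paper's proof essentially step for step: the same decomposition $E_{\epsilon,h}=J_1+J_2+J_3+J_4$ from (\ref{consist}), the zero-mean-jump plus trace/scaling argument for $J_1$ (which the paper simply delegates to Brenner--Scott), the mean-value subtraction enabled by (H4) for $J_3$, the interpolation bounds for $J_2$ and $J_4$, and the reduction to $J_1$ via Remark \ref{J234} in the conforming case. The only minor gap is that $J_1$ also contains the tangential term $\frac{\partial^2 u}{\partial n\partial s}[\frac{\partial w_h}{\partial s}]$, whose zero edge mean comes from (H2) rather than (H3), but it is treated by exactly the argument you describe for the normal-derivative term.
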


\begin{proof}
From the assumptions $(H2)$ and $(H3)$,  we have
$$\int_e [\frac{\partial w_h}{\partial s}]\,
ds=0,\ \ \ \int_e [\frac{\partial w_h}{\partial n}]\, ds=0.$$
By  a standard scaling argument (see, for example,  \cite{Brenner1994}, Pages 205-207),  it holds
\begin{eqnarray}\label{consistencyinequality3}
J_1=\epsilon^2 \sum_{e \in \mathcal {E}_h} \int_e\{(\Delta u-\frac{\partial^2 u}{\partial^2 s})
[\frac{\partial w_h}{\partial n}]+\frac{\partial^2 u}{\partial n
\partial s}[\frac{\partial w_h}{\partial s}]\}d s\lesssim \left\{\begin{array}{l}
h\epsilon |u|_3\|w_h\|_{\epsilon,h},\\\\
h^{1/2}\epsilon |u|_2^{1/2} |u|_3^{1/2}\|w_h\|_{\epsilon,h}.
\end{array}\right.
\end{eqnarray}

When $V_h \subset H_0^{1}(\Omega)$, the above inequality (\ref{consistencyinequality3}),
together with Remark \ref{J234}, indicates (\ref{consistencyinequality2}).

When $V_h\not \subset H_0^{1}(\Omega)$,  from Schwarz's inequality, (5) and the standard interpolation
theory, we have
\begin{eqnarray}\label{consistencyinequality4}
J_2=\epsilon^2 \sum_{T \in \mathcal {T}_h}\int_T \nabla(\Delta u)\cdot \nabla(\Pi w_h- w_h) \lesssim
\left\{\begin{array}{l}
h\epsilon |u|_3\|w_h\|_{\epsilon,h},\\\\
h^{1/2}\epsilon^{3/2} |u|_3\|w_h\|_{\epsilon,h}.
\end{array}\right.
\end{eqnarray}
Similarly, by (H4) we obtain
\begin{eqnarray}\label{consistencyinequality5}
\begin{aligned}
J_3&=\sum \limits_{T\in\mathcal {T}_h}\int_T \nabla u \cdot \nabla(w_h-\Pi w_h)\\
&=\sum \limits_{T \in\mathcal{T}_h}\int_T (\nabla u-\Pi_0 \nabla u) \cdot \nabla(w_h-\Pi w_h)\\
&  \lesssim h
|u|_2\|w_h\|_{\epsilon,h},
\end{aligned}
\end{eqnarray}
where  $\Pi_0 \nabla u=\frac{1}{|T|}\int_T \nabla u $.  We also have
\begin{eqnarray}\label{consistencyinequality6}
J_4=\int_\Omega f\ ( \Pi w_h-w_h)
  \lesssim h
||f||_0\|w_h\|_{\epsilon,h}.
\end{eqnarray}

As a result, the estimation  (\ref{consistencyinequality1}) follows from   (\ref{consist}),
(\ref{consistencyinequality3})-(\ref{consistencyinequality6}).
\end{proof}

From Lemma \ref{strang lemma}, the estimation (\ref{approximation error}), and Lemma
\ref{consistencylemma},  we immediately get the following main convergence result:
\begin{thm}\label{theorem1}
Suppose (H1)-(H4) hold true. Let $u$ and $u_h$ be the  solutions to the problems (\ref{continuous weak
formula}) and (\ref{discrete weak formula}) respectively.  Then it holds
\begin{equation}\label{theorem1inequality1}
\|u-u_h\|_{\epsilon,h}\lesssim h(|u|_2+\epsilon|u|_3+||f||_0).
\end{equation}
Moreover, when  $V_h \subset H_0^{1}(\Omega)$, we have
\begin{equation}\label{theorem1inequality2}
\|u-u_h\|_{\epsilon,h}\lesssim h(h+\epsilon )|u|_3.
\end{equation}
\end{thm}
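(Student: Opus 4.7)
The plan is to combine the three ingredients already assembled in the excerpt: the second Strang lemma (Lemma \ref{strang lemma}), the approximation estimate (\ref{approximation error}), and the consistency estimate of Lemma \ref{consistencylemma}. There is essentially nothing new to prove beyond chaining these inequalities, so the task is just bookkeeping of constants and branches.

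First I would invoke Lemma \ref{strang lemma} to write
$$\|u-u_h\|_{\epsilon,h} \lesssim \inf_{v_h\in V_h}\|u-v_h\|_{\epsilon,h} + \sup_{w_h\in V_h,\,w_h\ne 0}\frac{E_{\epsilon,h}(u,w_h)}{\|w_h\|_{\epsilon,h}}.$$
For the first (approximation) term I would apply (\ref{approximation error}) with the choice $v_h=I_hu$, using the first of the two bounds there, namely $h(|u|_2+\epsilon|u|_3)$. For the second (consistency) term I would apply (\ref{consistencyinequality1}) from Lemma \ref{consistencylemma}, dividing through by $\|w_h\|_{\epsilon,h}$ to extract $h(|u|_2+\epsilon|u|_3+\|f\|_0)$ uniformly in $w_h$. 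Adding the two contributions gives (\ref{theorem1inequality1}), because the approximation bound is dominated by the consistency bound.

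For the improved estimate (\ref{theorem1inequality2}) under the extra hypothesis $V_h\subset H^1_0(\Omega)$, I would use the second form of (\ref{approximation error}), namely $h(h+\epsilon)|u|_3$, and the first branch of (\ref{consistencyinequality2}) from Lemma \ref{consistencylemma}, which gives $h\epsilon|u|_3$. Both are bounded by $h(h+\epsilon)|u|_3$, and the sum retains this order, yielding (\ref{theorem1inequality2}).

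The only place that needs any care is confirming that the two branches in (\ref{approximation error}) and in (\ref{consistencyinequality2}) are each applicable under the stated regularity (here $u\in H^3$, which is implicit in the appearance of $|u|_3$), and that in the conforming case the $\|f\|_0$ term genuinely drops out because $J_2=J_3=J_4=0$ as noted in Remark \ref{J234}. There is no real obstacle; the theorem is a direct corollary of the preceding lemmas.
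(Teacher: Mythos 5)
Your proposal is correct and is exactly the paper's argument: the authors state that Theorem \ref{theorem1} follows immediately from Lemma \ref{strang lemma}, the approximation estimate (\ref{approximation error}), and Lemma \ref{consistencylemma}, which is precisely the chaining you carry out, including the correct choice of branches for the conforming case. Nothing further is needed.
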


In next section,  on the basis of (H1)-(H4), we will analyze the rectangular Morley element and
construct  a new high order nonconforming rectangular element for the problem .

\numberwithin{lem}{section} \numberwithin{thm}{section} \numberwithin{remark}{section}
\section{Nonconforming rectangular   elements}
In what follows we assume the domain $\Omega$ is a bounded polygonal domain   with a shape-regular rectangular
mesh subdivision  $\mathcal {T}_h$.
\subsection{Rectangular Morley element}
Given a  rectangle  $T\in \mathcal {T}_h$ with center $a_0=(x_0,y_0)$,
\begin{equation}\label{rectangle}
T=\{(x,y)|\, x=x_0+h_1\xi, y=y_0+h_2\eta ,-1 \leq \xi\leq 1, -1 \leq \eta \leq 1 \}.
\end{equation}
 Let $a_i $ and $e_i$  $(i=1,2,3,4)$ be its vertices and  edges, respectively (see Figure \ref{rectmorley}),  with
 edge lengthes $|e_1|=|e_2|=2h_1$ and $|e_3|=|e_4|=2h_2$.  The  rectangular Morley element is then described by
 $(T,P_T,N_T)$ \cite{ZhangWang1991}:
\par \text{(1)} \qquad $P_T=P_2(T)+span\{x^3,y^3\}$;
\par \text{(2)} \qquad For $  \forall v\in C^1(T)$,  the set of degrees of freedom
$$N_T(v)=\left\{v(a_i), \frac{1}{|e_i|}\int_{e_i}\frac{\partial v}{\partial n} ds:\ \ 1\leq i\leq 4\right\}.$$
 \begin{center}
\setlength{\unitlength}{0.9cm}
\begin{picture}(6,4.5)\label{rectmorley}
\put(0,0.5){\line(1,0){5}}        \put(5,0.5){\line(0,1){3}} \put(0,0.5){\line(0,1){3}}
\put(0,3.5){\line(1,0){5}} \put(0,0.5){\circle*{0.15}}       \put(5,0.5){\circle*{0.15}}
\put(0,3.5){\circle*{0.15}}       \put(5,3.5){\circle*{0.15}} \put(5,2){\vector(1,0){0.5}}
\put(0,2){\vector(-1,0){0.5}} \put(2.5,0.5){\vector(0,-1){0.5}} \put(2.5,3.5){\vector(0,1){0.5}}
\put(-0.3,0.2){\bf{$a_1$}}        \put(5,0.2){\bf{$a_2$}} \put(5,3.7){\bf{$a_3$}}
\put(-0.2,3.7){\bf{$a_4$}} \put(5.1,1.6){\bf{$e_1$}}         \put(-0.4,1.6){\bf{$e_2$}}
\put(2.7,3.7){\bf{$e_3$}}         \put(2.7,0.1){\bf{$e_4$}} \put(-3,-0.5){Figure \ref{rectmorley}:
The element diagram of the rectangular Morley element}
\end{picture}
\end{center}
\vspace{0.4cm}

\begin{remark}\label{equivalent DOF} In fact, it is easy to know that the element $(T,P_T,N_T)$ is
interpolation-equivalent to $(T,P_T,N_T')$, where
$$N_T'(v)=\left\{v(a_i), \frac{\partial v}{\partial n}(b_i): \ \ 1\leq i\leq 4\right\},$$
and $b_i$ is  the  midpoint of the   edge $e_i$ for $1\leq i \leq 4 $.
\end{remark}

Define the Morley space by
\begin{eqnarray*}\label{Morley space}
M_h=\{v_h\in L^2(\Omega): &v_h|_T\in P_T,\ \forall T\in \mathcal {T}_h; \ v_h \mbox{ is continuous at  the   vertices}\\
&\mbox{of  elements and vanishes at the vertices on } \partial \Omega;\\
& \int_e \frac{\partial v_h}{\partial n}\, ds  \mbox{  is continuous
across the element edge } e\\
&\mbox{and vanishes on }e \subset \partial\Omega \}.
\end{eqnarray*}
Obviously we have $M_h\not\subset H^1_0(\Omega)$.
For $\forall v_h \in M_h, \ T\in \mathcal
{T}_h$, we can write it in the form
\begin{equation}\label{vh}
v_h|_T=\sum \limits_{i=1}^{4}v_h(a_i)\ q_i+\sum
\limits_{i=1}^{4}\frac{1}{|e_i|}\int_{e_i}\frac{\partial v_h} {\partial n}ds\ q_{i+4},
\end{equation}
where $q_i \  ( 1 \leq i \leq 8)$ are the corresponding   basis functions given by
 \begin{eqnarray*}
\begin{aligned}
q_1(\xi,\eta)&=\frac{1}{4}\,(1-\xi)\,(1-\eta)+\frac{1}{8}\,\xi\,(\xi^2-1)+\frac{1}{8}\,\eta\,(\eta^2-1),\\
q_2(\xi,\eta)&=\frac{1}{4}\,(1+\xi)\,(1-\eta)-\frac{1}{8}\,\xi\,(\xi^2-1)+\frac{1}{8}\,\eta\,(\eta^2-1),\\
q_3(\xi,\eta)&=\frac{1}{4}\,(1+\xi)\,(1+\eta)-\frac{1}{8}\,\xi\,(\xi^2-1)-\frac{1}{8}\,\eta\,(\eta^2-1),\\
q_4(\xi,\eta)&=\frac{1}{4}\,(1-\xi)\,(1+\eta)+\frac{1}{8}\,\xi\,(\xi^2-1)-\frac{1}{8}\,\eta\,(\eta^2-1),\\
\end{aligned}
\end{eqnarray*}
\vspace{-0.5cm}
\begin{eqnarray*}
\begin{aligned}
q_5(\xi,\eta)&=\frac{h_1}{4}(\xi+1)^2\,(\xi-1),\quad
q_6(\xi,\eta)=-\frac{h_1}{4}(\xi+1)\,(\xi-1)^2,\\
q_7(\xi,\eta)&=\frac{h_2}{4}(\eta+1)^2\,(\eta-1),\quad
q_8(\xi,\eta)=-\frac{h_2}{4}(\eta+1)\,(\eta-1)^2.\\
\end{aligned}
\end{eqnarray*}

Now we take $V_h=M_h$, and let $\Pi$ be the usual bilinear interpolation
operator corresponding to the $H^1-$conforming bilinear element with respect to $\mathcal {T}_h$.

By the definition we easily know that the assumptions (H1)-(H3) hold for the Morley space $V_h$.
We will further show (H4) also holds true. We have
\begin{lem}
For $\forall \:v_h \in M_h,\ \forall T\,\in \mathcal {T}_h$,  it holds
$$\int_T \nabla(v_h-\Pi v_h) =0.$$
\end{lem}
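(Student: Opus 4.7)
The plan is to apply the divergence theorem on $T$ and then exploit the polynomial structure of $P_T = P_2(T) + \mathrm{span}\{x^3, y^3\}$ to show that the resulting contour integrals on opposite edges coincide. Writing $w := v_h - \Pi v_h$, and noting that $T = [x_0-h_1, x_0+h_1]\times[y_0-h_2, y_0+h_2]$ has outward normal $\pm e_x$ on the two vertical edges and $\pm e_y$ on the two horizontal edges, Gauss's formula gives
$$ \int_T \partial_x w \,dx\,dy = \int_{x=x_0+h_1} w\, ds - \int_{x=x_0-h_1} w\, ds,$$
and analogously for $\partial_y w$. So the task reduces to showing that $\int_e w\, ds$ takes the same value on each pair of opposite edges.

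Next I would describe $w$ on a single vertical edge. Because $v_h$ and $\Pi v_h$ agree at the four vertices, $w$ vanishes at all vertices. The bilinear interpolant $\Pi v_h$ restricts to a linear function on any edge, while $v_h\in P_T$ restricts on a vertical line $x=\mathrm{const}$ to a cubic polynomial in $y$ (the $x^3$ piece contributes only a constant there). Hence, in the local variable $\eta = (y-y_0)/h_2$, we may write $w|_{\text{edge}}(\eta) = (\eta^2 - 1)(A + B\eta)$ for coefficients $A, B$ that a priori depend on which vertical edge is chosen, and a direct computation gives $\int_{-1}^1 (\eta^2-1)(A+B\eta)\, d\eta = -\tfrac{4}{3}A$, so $\int_{\text{edge}} w\, ds = -\tfrac{4}{3}h_2 A$.

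The crux is then to verify that $A$ takes the same value on the right edge as on the left edge. Expanding $v_h = \sum_{i+j\le 2} a_{ij}\,x^i y^j + \alpha x^3 + \beta y^3$, the $y^2$ coefficient of $v_h(c, y)$ along a vertical line $x=c$ is exactly $a_{02}$, independent of $c$; neither the $x^3$ term nor any mixed $P_2$ term contributes at order $y^2$. Passing to $\eta$, the $\eta^2$ coefficient of $w|_{\text{edge}}$ is $a_{02}h_2^2$ regardless of the edge, so $A_{\text{right}}=A_{\text{left}}$, the two boundary integrals cancel, and $\int_T \partial_x w = 0$. The symmetric argument on horizontal edges yields $\int_T \partial_y w = 0$, completing the proof.

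I do not anticipate a real obstacle: the whole argument is driven by the structural fact that the cubic enrichment $\mathrm{span}\{x^3,y^3\}$ leaves the pure quadratic coefficients $a_{02}$ and $a_{20}$ of any polynomial in $P_T$ independent of the transverse variable. The only nontrivial computation is the elementary integral $\int_{-1}^1 (\eta^2-1)(A+B\eta)\, d\eta$, and everything else is a consequence of the vanishing of $w$ at the vertices together with this structural observation.
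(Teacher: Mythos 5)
Your proof is correct, but it takes a genuinely different route from the paper's. The paper argues by explicit basis expansion: writing $v_h|_T=\sum_{i=1}^4 v_h(a_i)\,q_i+\sum_{i=1}^4\frac{1}{|e_i|}\int_{e_i}\frac{\partial v_h}{\partial n}\,ds\ q_{i+4}$ and $\Pi v_h|_T=\sum_{i=1}^4 v_h(a_i)\,\tilde p_i$, it reduces the claim to the eight identities $\int_{\hat T}\hat\nabla(q_i-\tilde p_i)\,d\xi d\eta=0$ $(1\le i\le 4)$ and $\int_{\hat T}\hat\nabla q_{i+4}\,d\xi d\eta=0$, which are checked by direct computation on the reference square and pulled back through the diagonal Jacobian. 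Your argument instead integrates by parts and exploits the structure of $P_T=P_2(T)+\mathrm{span}\{x^3,y^3\}$: the $y^2$ and $y^3$ coefficients of $v_h(c,\cdot)$ do not depend on $c$, so after subtracting the bilinear interpolant (which is linear on each edge and matches $v_h$ at the vertices) the traces of $w=v_h-\Pi v_h$ on opposite edges have equal integrals. This is basis-free, explains why the identity holds, and in fact proves slightly more: the difference of the two opposite-edge traces is a linear polynomial vanishing at both endpoints, hence the traces coincide identically. One small inaccuracy worth noting: after the substitution $y=y_0+h_2\eta$ the $\eta^2$ coefficient of $w$ on a vertical edge is $a_{02}h_2^2+3\beta y_0 h_2^2$ rather than $a_{02}h_2^2$, since the $\beta y^3$ term also contributes at order $\eta^2$; but this extra contribution is likewise independent of which vertical edge is chosen, so your conclusion $A_{\mathrm{right}}=A_{\mathrm{left}}$, and hence the proof, is unaffected.
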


\begin{proof}
For $v_h \in M_h$, the bilinear interpolation  $\Pi v_h$ can be expressed as:
\begin{equation*}
 \Pi v_h|_T=\sum \limits_{i=1}^{4}v_h(a_i) \tilde{p}_i,
\end{equation*}
where $ \tilde{p}_i \ ( 1 \leq i \leq 4)$ are the corresponding bilinear basis functions, namely
$$ \tilde{p}_1 =\frac{1}{4}\,(1-\xi)\,(1-\eta),\ \tilde{p}_2=\frac{1}{4}\,(1+\xi)\,(1-\eta),$$
$$  \tilde{p}_3=\frac{1}{4}\,(1+\xi)\,(1+\eta),\  \tilde{p}_4=\frac{1}{4}\,(1-\xi)\,(1+\eta).$$
 Let $\tilde{q_i}=q_i- \tilde{p}_i$,  then from (\ref{vh}) we have
\begin{eqnarray}\label{vh-Pivh}
v_h-\Pi v_h= \sum \limits_{i=1}^{4}v_h(a_i)\ \tilde{ q_i}+\sum \limits_{i=1}^{4}\frac{1}{|e_i|}
\int_{e_i}\frac{\partial v_h}{\partial n} ds\ q_{i+4}.
\end{eqnarray}
It is easy to see that
\begin{eqnarray*}
\int_{\hat{T}}\hat{\nabla} \tilde{q_i} d\xi d\eta=0, 1\leq i\leq4,
\end{eqnarray*}
\begin{eqnarray*}
 \int_{\hat{T}}\hat{\nabla}
q_i d\xi d\eta=0, 5\leq i\leq8,
\end{eqnarray*}
where $\hat T=[-1,1]\times[-1,1]$,
$\hat{\nabla}=[\frac{\partial}{\partial \xi}, \frac{\partial}{\partial \eta}]^T$.  These indicate
\begin{eqnarray*}
\begin{aligned}
\int_T \nabla \tilde{q_i} dxdy&=\left[
\begin{array}{c c}
h_2  &0   \\
0    &h_1 \\
\end{array}
\right ]\int_{\hat{T}}\hat{\nabla} \tilde{q_i} d\xi d\eta=0, \ 1\leq i\leq4,\\
\int_T \nabla q_i dxdy&= \left[
\begin{array}{c c}
h_2  &0   \\
0    &h_1 \\
\end{array}
\right ] \int_{\hat{T}}\hat{\nabla} q_i d\xi d\eta=0, \ 5\leq i\leq8.
\end{aligned}
\end{eqnarray*}
The above two relations, together with (\ref{vh-Pivh}), yield
\begin{equation*}
\int_T \nabla(v_h-\Pi v_h)=\int_T\left( \sum \limits_{i=1}^{4} v_h(a_i)\nabla\tilde{q_i}+
\sum \limits_{i=1}^{4}\frac{1}{|e_i|}\int_{e_i}\frac{\partial v_h}{\partial n}ds\
\nabla q_{i+4}\right)=0.
\end{equation*}
\end{proof}

As a result,  the rectangular Morley space $M_h$ satisfies the assumptions (H1)-(H4). Then,
from Theorem \ref{theorem1},  we have

\begin{thm}\label{Morley error}
Let $u$ and $u_h$ be the  solutions to the problems (\ref{continuous weak
formula}) and (\ref{discrete weak formula}) respectively.  Then, for the rectangular Morley element,
it holds the following error estimate:
\begin{equation}\label{Morleyerror}
\|u-u_h\|_{\epsilon,h}\lesssim h(|u|_2+\epsilon |u|_3+||f||_0).
\end{equation}
\end{thm}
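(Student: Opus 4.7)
The plan is to reduce the theorem to a direct application of Theorem \ref{theorem1}, so the entire task amounts to verifying that the rectangular Morley space $M_h$ satisfies the four structural hypotheses (H1)--(H4). Once these are in hand, estimate (\ref{Morleyerror}) is just (\ref{theorem1inequality1}) specialized to $V_h = M_h$.

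First I would check (H1). Since $P_T = P_2(T) + \mathrm{span}\{x^3,y^3\}$, clearly $P_2(T)\subset P_T$. For the invariance $I_h v = v$ on $P_2(T)$, I would argue by unisolvence: the degrees of freedom in $N_T$ (eight: four vertex values and four edge-averages of $\partial_n$) uniquely determine elements of $P_T$, and any $v\in P_2(T)$ is an element of $P_T$ whose own degrees of freedom agree with its interpolant, forcing $I_h v=v$. For (H2) and (H3), I would simply read off the definition of $M_h$: continuity at vertices, vanishing at boundary vertices, and continuity of the edge averages $\int_e \partial_n v_h\,ds$ together with the boundary condition are built into the space.

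The nontrivial condition is (H4), and this has already been established by the preceding lemma, which shows $\int_T \nabla(v_h - \Pi v_h) = 0$ for all $v_h \in M_h$ and $T\in\mathcal{T}_h$, with $\Pi$ the usual continuous bilinear interpolant. Hence all of (H1)--(H4) are verified for $V_h=M_h$.

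With the four hypotheses in place, I would invoke Theorem \ref{theorem1} directly: (\ref{theorem1inequality1}) gives
\begin{equation*}
\|u-u_h\|_{\epsilon,h}\lesssim h(|u|_2+\epsilon|u|_3+\|f\|_0),
\end{equation*}
which is exactly (\ref{Morleyerror}). The main (and essentially only) obstacle in this argument was the verification of (H4); the rest is bookkeeping. I would close by noting that since $M_h\not\subset H^1_0(\Omega)$, the sharper estimate (\ref{theorem1inequality2}) is not available here, which is consistent with the known failure of the unmodified Morley method to converge in the $\epsilon\to 0$ limit without the integral-mean correction provided by (H4).
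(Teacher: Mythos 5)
Your proposal is correct and follows exactly the paper's route: the paper likewise notes that (H1)--(H3) hold by definition of $M_h$, establishes (H4) via the preceding lemma on $\int_T\nabla(v_h-\Pi v_h)=0$, and then invokes Theorem \ref{theorem1} to obtain (\ref{Morleyerror}). Your extra detail on (H1) via unisolvence and the closing remark about the unavailability of (\ref{theorem1inequality2}) are consistent with the paper.
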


In next subsection, we will propose an extended  high order $C^0$ rectangular  Morley element for the problem (\ref{pde}).
\subsection{Extended high order rectangular Morley  element}



Let  $T\in \mathcal {T}_h$ be a rectangle given by (\ref{rectangle}) . Let $m_i$ be the four midpoints of the edges,
$1 \leq i \leq 4$ (see  Figure \ref{rectangleelement}).  Introduce three functions like
\begin{equation}\label{phi123}
\left\{\begin{array}{l}
\phi_1(\xi,\eta)=\xi^4\,(1-\eta^2),\\
\phi_2(\xi,\eta)=\eta^3\,(1-\xi^2),\\
\phi_3(\xi,\eta)=(\xi+\eta)(1-\xi^2)\,(1-\eta^2).
\end{array}\right.
\end{equation}
Then the extended high order  rectangular Morley  element   $(T,Q_T,\Phi_T)$ is given by
\par \text{(1)} $Q_T=Q_2(T)+span \{\,\phi_1(\xi,\eta),\phi_2(\xi,\eta),\phi_3(\xi,\eta)\};$
\par \text{(2)} For $v \in C^1(T)$, the set of degrees of freedom
$$\Phi_T=\left\{v(a_i),\quad v(m_i),\quad\,\int_{e_i}\frac{\partial v}{\partial n}ds,\:1\leq i\leq 4\right\},$$
where $Q_2(T)$ is the set of bi-quadratic polynomials on $T$.

\begin{center}
\setlength{\unitlength}{1cm}
\begin{picture}(6,4.2)\label{rectangleelement}
\put(0,0.5){\line(1,0){5}}        \put(5,0.5){\line(0,1){3}} \put(0,0.5){\line(0,1){3}}
\put(0,3.5){\line(1,0){5}} \put(0,0.5){\circle*{0.15}}       \put(5,0.5){\circle*{0.15}}
\put(0,3.5){\circle*{0.15}}       \put(5,3.5){\circle*{0.15}} \put(5,2){\circle*{0.15}}
\put(0,2){\circle*{0.15}} \put(2.5,0.5){\circle*{0.15}}     \put(2.5,3.5){\circle*{0.15}}
\put(5,2){\vector(1,0){0.5}}      \put(0,2){\vector(-1,0){0.5}}
\put(2.5,0.5){\vector(0,-1){0.5}} \put(2.5,3.5){\vector(0,1){0.5}} \put(-0.3,0.2){\bf{$a_1$}}
\put(5,0.2){\bf{$a_2$}} \put(5,3.7){\bf{$a_3$}}           \put(-0.2,3.7){\bf{$a_4$}}
\put(5.1,1.6){\bf{$e_1$}}         \put(2.7,3.6){\bf{$e_2$}} \put(-0.5,1.6){\bf{$e_3$}}
\put(2.7,0.1){\bf{$e_4$}} \put(4.4,1.9){\bf{$m_1$}}         \put(2.3,3.1){\bf{$m_2$}}
\put(0.2,1.9){\bf{$m_3$}}         \put(2.3,0.7){\bf{$m_4$}} \put(-4,-0.5){Figure
\ref{rectangleelement}: The element diagram of the extended high order  rectangular Morley
element}
\end{picture}
\end{center}
\vspace{0.2cm}

\begin{lem}\label{unisolventlemma}
For the extended high order  rectangular Morley element, $\Phi_T$ is $Q_T$-unisolvent.
\end{lem}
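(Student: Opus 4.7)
The plan is to follow the standard recipe for proving unisolvency of a finite element: (i) verify the dimension count matches, (ii) suppose all 12 degrees of freedom vanish on some $v\in Q_T$ and deduce $v|_{\partial T}=0$ from the vertex and midpoint values, (iii) identify the subspace $Q_T^0=\{v\in Q_T:v|_{\partial T}=0\}$ explicitly, and (iv) use the four normal-derivative integrals to conclude $v=0$ on $Q_T^0$. Throughout I would pass to the reference rectangle $\hat T=[-1,1]^2$ by the affine map.

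For step (i), $\dim Q_2(T)=9$ and there are $12$ degrees of freedom, so I need $\phi_1,\phi_2,\phi_3$ to be linearly independent modulo $Q_2(T)$. This is immediate from looking at monomials not in $Q_2$: $\phi_1$ contains the $\xi^4$ term (absent from $\phi_2,\phi_3$), $\phi_2$ contains $\eta^3$ (absent from $\phi_1$, while $\phi_3$ contains $-\eta^3$), and then looking at the $\xi^3$ coefficient of $\phi_3$ rules out any remaining nontrivial combination. Hence $\dim Q_T=12=|\Phi_T|$.

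For step (ii), I examine each edge of $\hat T$. On $\xi=1$, the three functions $\phi_1,\phi_2,\phi_3$ restrict to $1-\eta^2$, $0$, $0$ respectively, so $v|_{\xi=1}$ is a polynomial of degree $\le 2$ in $\eta$; vanishing at the two vertices and at the midpoint $m_1$ forces it to be identically zero. The other three edges are handled the same way (noting $\phi_2|_{\eta=\pm1}=\pm(1-\xi^2)$ has degree $2$ in $\xi$). Thus $v$ vanishes on $\partial T$.

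For step (iii), writing $v=p+a\phi_1+b\phi_2+c\phi_3$ with $p\in Q_2$ and using the edge restrictions, one sees that $p(\pm1,\eta)=-a(1-\eta^2)$, $p(\xi,1)=-b(1-\xi^2)$, $p(\xi,-1)=b(1-\xi^2)$, and the remaining free parameter inside $Q_2$ is a single interior value, e.g.\ $p(0,0)$. Consequently $Q_T^0$ is four-dimensional and a convenient basis is
\[
\psi_0=(1-\xi^2)(1-\eta^2),\quad \psi_1=-\xi^2(1-\xi^2)(1-\eta^2),
\]
\[
\psi_2=-\eta(1-\xi^2)(1-\eta^2),\quad \psi_3=(\xi+\eta)(1-\xi^2)(1-\eta^2),
\]
obtained by adjusting a $Q_2$-correction term to each $\phi_i$ so that the boundary traces cancel. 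Step (iv) then reduces to checking that the $4\times 4$ matrix whose $(i,j)$-entry is $\int_{\hat e_j}\partial\psi_{i-1}/\partial\hat n\,ds$ is nonsingular. Since each $\psi_i=(1-\xi^2)(1-\eta^2)g_i$, the normal derivative on each edge simplifies to $\pm 2(1-s^2)g_i$ evaluated on the edge, reducing the entries to one-dimensional integrals of polynomials on $[-1,1]$; a short direct computation gives a constant nonzero determinant. The main obstacle is this last step—there is no shortcut symmetry that makes the matrix obviously nonsingular, so the determinant must be computed. Unisolvency on $T$ then follows by the affine invariance of the construction.
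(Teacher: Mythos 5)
Your proposal is correct and follows essentially the same route as the paper: the vertex and midpoint values reduce the problem to a four-dimensional space of edge bubbles, and the four integrals $\int_{e_i}\partial v/\partial n\,ds$ yield a $4\times 4$ linear system whose nonsingularity is checked by direct computation. In fact your basis $\psi_0,\dots,\psi_3$ produces exactly the matrix displayed in the paper's proof (its determinant is $65536/405\neq 0$), so the one step you deferred as ``a short direct computation'' does go through.
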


\begin{proof}
We only need to give the proof on  $T=\hat{T}=[-1,1]\times [-1,1]$.
Let $\{p_i :i=1,2,\cdots,9\} $ be a basis of $Q_2(T)$ defined by
\begin{equation*}
\begin{aligned}
p_1(\xi,\eta)&=\frac{1}{4}\xi\,\eta\,(1-\xi)\,(1-\eta), \quad
p_2(\xi,\eta)=-\frac{1}{4}\xi\,\eta\,(1+\xi)\,(1-\eta), \\
p_3(\xi,\eta)&=\frac{1}{4}\xi\,\eta\,(1+\xi)\,(1+\eta),  \quad
p_4(\xi,\eta)=-\frac{1}{4}\xi\,\eta\,(1-\xi)\,(1+\eta), \\
p_5(\xi,\eta)&=\frac{1}{2}(1-\eta^2)\,\xi\,(1+\xi),     \quad
p_6(\xi,\eta)=\frac{1}{2}(1-\xi^2)\,\eta\,(1+\eta),   \,\\
p_7(\xi,\eta)&=-\frac{1}{2}(1-\eta^2)\,\xi\,(1-\xi), \quad \,
p_8(\xi,\eta)=-\frac{1}{2}(1-\xi^2)\,\eta\,(1-\eta),  \\
p_9(\xi,\eta)&=(1-\xi^2)\,(1-\eta^2).
\end{aligned}
\end{equation*}
Then, for  any function $w \in
Q_T$, we can express it   in the form
\begin{equation}\label{w-form}
w=\sum \limits_{i=1}^{9}\beta_i p_i+\beta_{10}\phi_{1}+\beta_{11}\phi_{2}+\beta_{12}\phi_{3},
\end{equation}
where $\phi_i \ (i=1,2,3)$ are given by (\ref{phi123}), and  the parameters $\beta_i\in \Re$ ($i=1,2,\cdots,12$). In what follows we  will show that, if  the twelve degrees of freedom of $w$ vanish, i.e.
$$w(a_i) =w(m_i) = \int_{e_i}\frac{\partial w}{\partial n}ds=0,\ \ i=1,2,3,4,$$
then  $w=0$.

Since
$$p_i(a_j)=\delta_{i,j},\ \phi_k(a_j)=0,\ \ j=1,2,3,4;\ i=1,2,\cdots,9;\ k=1,2,3, $$
from (\ref{w-form}) we immediately have
\begin{equation}\label{unisolvent1}
\beta_i=0,\ \ 1\leq i \leq 4.
\end{equation}

From $w(m_i)=0$ for $i=1,2,3,4$, we get
$$ -\beta_5-\beta_{11}=0,\, \beta_6+\beta_{10}=0,\, \beta_7+\beta_{11}=0,\, -\beta_8+\beta_{10}=0.$$
These yield
\begin{equation}\label{unisolvent2}
\beta_5=-\beta_{11},\,
\beta_6=-\beta_{10},\,\beta_7=-\beta_{11},\,\beta_8=\beta_{10}.
\end{equation}
 Hence, by (\ref{unisolvent1}) and  (\ref{unisolvent2}),  $w$  has  the  form
$$w=\beta_9 p_9+\beta_{10}(-p_6+p_8+\phi_1)+\beta_{11}(-p_5-p_7+\phi_2)+\beta_{12}\phi_3.$$
Finally, by  $\int_{e_i}\frac{\partial w}{\partial n}ds=0$ for $i=1,2,3,4$, we obtain the following system
\[
\left[
\begin{array}{r r r r}
-8/3  &8/3   &0     &-8/3  \\
-8/3  &8/15  &8/3   &-8/3  \\
-8/3  &8/3   &0     &8/3   \\
-8/3  &8/15  &-8/3  &8/3  \\
\end{array}
\right ]\left[\begin{array}{l}
\beta_9\\ \beta_{10}\\ \beta_{11}\\ \beta_{12}
\end{array}\right]={\bf 0}.
\]
  It is easy to know the solution to   this system is
\begin{equation}\label{unisolvent3}
\beta_9=\beta_{10}=\beta_{11}=\beta_{12}=0.
\end{equation}
Consequently, the desired conclusion follows from (\ref{unisolvent1})  (\ref{unisolvent2}) and (\ref{unisolvent3}).
\end{proof}

\begin{remark}
In fact, the selection of $\{ \phi_1(\xi,\eta),\phi_2(\xi,\eta),\phi_3(\xi,\eta) \}$ in the shape function space $\Phi_T$ is not unique. To ensure the element is $C^0$,   $\phi_i$ can be of the following
form:
\begin{equation*}
\left\{\begin{array}{l}
\phi_1(\xi,\eta)=g_1(\xi)\,(1-\eta^2),\\
\phi_2(\xi,\eta)=g_2(\eta)\,(1-\xi^2),\\
\phi_3(\xi,\eta)=g_3(\xi,\eta)\,(1-\xi^2)\,(1-\eta^2),
\end{array}\right.
\end{equation*}
where  $g_1(\xi)$ and $g_2(\eta)$ are polynomials of degrees $\geq3$, and $g_3(\xi,\eta)$ is a polynomial of degree $\geq 1$. For
example, a choice of $\phi_i$ different from  (\ref{phi123}) can be like
$$
\phi_1=\xi^3\,(1-\eta^2),\, \phi_2=\eta^4\,(1-\xi^2),\,\phi_3=(\xi+\eta)\,(1-\xi^2)\,(1-\eta^2).$$
\end{remark}

For $v\in Q_T$, it is easy to see that $v|_{e_i}\in P_2(e_i) \ (i=1,2,3,4)$. Then $v|_{e_i} $ is determined by   the
degrees of freedom associated to the endpoints and mid-point of the edge $e_i$. Therefore, the extended high order rectangular element $(T, Q_T,\Phi_T)$ is $C^0$.

Now we   define the extended high order  rectangular Morley   space as :
\begin{eqnarray*}\label{extended Morley space}
M^E_h=\{v_h\in L^2(\Omega): &v_h|_T\in Q_T,\ \forall T\in \mathcal {T}_h; \ v_h \mbox{ is continuous at     vertices and}\\
&\mbox{edge midpoints of  elements and vanishes at the vertices }\\
& \mbox{and edge midpoints  on } \partial \Omega;\ \int_e \frac{\partial v_h}{\partial n}\, ds  \mbox{  is continuous}\\
&\mbox{across the element edge } e
\mbox{ and vanishes on }e \subset \partial\Omega \}.
\end{eqnarray*}
It is obvious  $M^E_h\subset H_0^1(\Omega)$ and $M^E_h\not\subset
H^2(\Omega)$. Then this extended high order  rectangular Morley element  leads to a nonconforming
method for the fourth order problem.

Taking $V_h=M_h^E$ in (\ref{discrete weak formula}),  we 
easily know the assumptions (H1)-(H3) hold true. Then, from Theorem \ref{theorem1},
we have
\begin{thm}\label{extended Morley error}
 Let $u$ and $u_h$ be the  solutions to the problems (\ref{continuous weak
formula}) and (\ref{discrete weak formula}) respectively.  Then, for the  extended high order  rectangular Morley element, it holds
 \begin{equation}\label{theorem1inequality2}
\|u-u_h\|_{\epsilon,h}\lesssim \left\{
\begin{array}{l}
h(|u|_2+\epsilon |u|_3+||f||_0), \\
(h^2+\epsilon h)|u|_3.
\end{array}
\right.
\end{equation}
\end{thm}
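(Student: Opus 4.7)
The plan is to reduce the theorem to Theorem \ref{theorem1} by verifying that the space $V_h=M_h^E$ meets the hypotheses (H1)--(H3) of Section 3 and, crucially, that $M_h^E\subset H_0^1(\Omega)$ so that the stronger (second) branch of Theorem \ref{theorem1} is available. For the $H^1$ conformity, I would invoke the remark in the text that $v|_{e_i}\in P_2(e_i)$ for every $v\in Q_T$; consequently $v|_{e_i}$ is determined by the two endpoint values and the midpoint value, and the continuity of these nodal values across interior edges of $\mathcal{T}_h$ forces $M_h^E\subset C^0(\Omega)$, while the prescribed vanishing at boundary vertices and boundary edge midpoints yields $M_h^E\subset H_0^1(\Omega)$. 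In this situation Remark \ref{modified} lets us take $\Pi=\mathbf{I}$ and (H4) becomes vacuous.

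To verify (H1), I would note that $P_2(T)\subset Q_2(T)\subset Q_T$; moreover, for any $v\in P_2(T)\subset Q_T$ the local interpolant $I_h|_T v$ is, by construction, the unique element of $Q_T$ agreeing with $v$ in the twelve degrees of freedom of $\Phi_T$, and the unisolvence already established in Lemma \ref{unisolventlemma} forces that unique element to be $v$ itself, so $I_h|_T v=v$. Assumptions (H2) and (H3) follow directly from the definition of $M_h^E$: continuity at vertices (and vanishing at boundary vertices) is imposed by the nodal parameters, and the mean normal derivative $\frac{1}{|e|}\int_e \partial_n v_h\,ds$ is a degree of freedom that is continuous across interior edges and vanishes on boundary edges.

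With (H1)--(H3) in hand and $V_h\subset H_0^1(\Omega)$, Theorem \ref{theorem1} immediately yields both claimed bounds, namely $\|u-u_h\|_{\epsilon,h}\lesssim h(|u|_2+\epsilon|u|_3+\|f\|_0)$ and $\|u-u_h\|_{\epsilon,h}\lesssim h(h+\epsilon)|u|_3 = (h^2+\epsilon h)|u|_3$. There is essentially no obstacle remaining: the only genuinely nontrivial step, proving $Q_T$--unisolvence of the twelve nodal functionals, has already been carried out in Lemma \ref{unisolventlemma}, and the quadratic edge trace property has already been recorded just before the statement of the theorem. Hence the proof is a short check followed by a direct appeal to Theorem \ref{theorem1}.
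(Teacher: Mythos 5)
Your proposal is correct and follows essentially the same route as the paper: the paper likewise observes that $M_h^E\subset H_0^1(\Omega)$ (via the $P_2$ edge-trace/$C^0$ argument and Lemma \ref{unisolventlemma}), checks (H1)--(H3), and then invokes Theorem \ref{theorem1} directly for both bounds. The only cosmetic slip is that taking $\Pi=\mathbf{I}$ when $V_h\subset H_0^1(\Omega)$ is licensed by (\ref{identity}) rather than Remark \ref{modified}, which does not affect the argument.
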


\subsection{Boundary layers and uniform error estimates }

From Theorem \ref{Morley error} and Theorem \ref{extended Morley error}, we can conclude that the rectangular Morley element and the
extended high order  rectangular Morley element ensure linear convergence with respect to $h$,
uniformly in $\epsilon$, under the condition the semi-norm $|u|_2+\epsilon|u|_3$ being uniformly
bounded. In general, we can't expect that the norm $|u|_2$ and $|u|_3$ is bounded independent of
$\epsilon$. Actually, as $\epsilon$ approaches to zero $|u|_2$ and $|u|_3$ should be expected to blow
up. Hence, the convergence estimates given in the theorems   will deteriorate as $\epsilon$
becomes small. The purpose of this section is to establish error estimates which are
uniform with respect to the perturbation parameter $\epsilon \in [0,1]$ for  the rectangular Morley element and the extended high order
rectangular Morley element.

From the regularity theory for elliptic problems in non-smooth domains (see \cite{P.Gris}: Corollary
7.3.2.5), we have the following regularity result  for the problem    (\ref{pde}): If $f\in H^{-1}( \Omega)$ and $\Omega$ is convex, then  $u \in  H^3(\Omega)$ and it holds
\begin{equation}\label{regularity}
\|u\|_3 \leq C_\epsilon \parallel \!f \!\parallel  _{-1}.
\end{equation}
Here $C_\epsilon$ is a positive constant independent of $f$ but in general dependent on the parameter $\epsilon$.

In \cite{NTR}, Nilssen, Tai and Winther derived  the following refined regularity result:
\begin{lem}\label{uniformlemma}
Assume $f\in L^2( \Omega)$  and $\Omega$ is convex. Let $u =u^\epsilon\in H^2_{0}( \Omega) \cap H^3( \Omega) $ and  $u^0
\in H^1_{0}( \Omega) \cap H^2( \Omega) $ be respectively  the weak solutions to the problem(\ref{pde}) and  the reduced problem
\begin{equation}\label{lem4inequality1}
\left \{
\begin{aligned}
-\Delta u^0 &=f  \quad \text{in} \quad \Omega\\
u^0 &=0   \quad \text{on} \quad \partial \Omega.
\end{aligned}
\right.
\end{equation}
Then it holds
\begin{equation}\label{lem4inequality2}
\epsilon^{-1/2}|u-u^0|_1+\epsilon^{1/2}|u|_2+\epsilon^{3/2}|u|_3\lesssim\|f\|_0.
\end{equation}
\end{lem}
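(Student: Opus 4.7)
The three bounds share matching $\epsilon$-scalings that encode the boundary-layer structure of $u$: the solution agrees with the reduced Poisson solution $u^0$ in the interior but acquires a correction of thickness $O(\epsilon)$ near $\partial\Omega$ to enforce $\partial u/\partial n=0$, and such a layer has $H^k$-seminorm of order $\epsilon^{3/2-k}$, matching precisely the three weights. My plan is to prove all three bounds together, starting from a single weighted energy identity and then bootstrapping in regularity.

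I would start by fixing $u^0\in H^1_0\cap H^2$ with $\|u^0\|_2\lesssim\|f\|_0$ from convex-domain regularity for Poisson (Grisvard, as already invoked for (\ref{regularity})). Testing the weak form (\ref{continuous weak formula}) with $v=u\in H^2_0$ gives the basic energy identity $\epsilon^2|u|_2^2+|u|_1^2=(f,u)$, while testing the reduced equation with both $u$ and $u^0$ yields $b(u^0,u)=(f,u)$ and $|u^0|_1^2=(f,u^0)$. Combining these with the expansion $|u-u^0|_1^2=|u|_1^2-2b(u^0,u)+|u^0|_1^2$, I arrive at the key identity
\begin{equation*}
|u-u^0|_1^2+\epsilon^2|u|_2^2=(f,u^0-u).
\end{equation*}

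The heart of the proof is to extract an additional factor of $\epsilon$ from the right-hand side. Using $-\Delta u^0=f$, the fact that $u^0-u\in H^1_0$, and the strong form $\Delta u=\epsilon^2\Delta^2 u-f$, I would integrate by parts repeatedly to rewrite $(f,u^0-u)$ as an interior term of order $\epsilon^2(f,\Delta u)$ plus a boundary contribution $\epsilon^2\int_{\partial\Omega}(\partial u^0/\partial n)\Delta u\,ds$. The interior piece is immediately $\lesssim\epsilon^2\|f\|_0|u|_2$. The boundary piece is controlled by the trace bounds $\|\partial u^0/\partial n\|_{L^2(\partial\Omega)}\lesssim\|u^0\|_2\lesssim\|f\|_0$ and $\|\Delta u\|_{L^2(\partial\Omega)}\lesssim|u|_2^{1/2}|u|_3^{1/2}$, producing the characteristic $\epsilon^{1/2}$-factor after Young's inequality once $|u|_3$ is brought in. The third bound $\epsilon^{3/2}|u|_3\lesssim\|f\|_0$ comes from reading the PDE as $-\Delta u=f-\epsilon^2\Delta^2 u$ and invoking the convex-domain $H^3$-regularity for Poisson, bootstrapping from the previously obtained $|u|_2$ estimate.

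The main obstacle is the $\epsilon^{1/2}$-gain. The naive bound $|(f,u^0-u)|\lesssim\|f\|_0|u-u^0|_1$ only gives $|u-u^0|_1\lesssim\|f\|_0$, losing the sharp scaling entirely. The gain must come from the fact that $u-u^0$ vanishes on the boundary but its normal derivative does not, so the boundary integral of $\Delta u$ above is exactly what carries the correct $\epsilon$-weight. This couples $|u|_3$ back into the left-hand side, so the three estimates cannot be proved sequentially; they must be closed together by a simultaneous absorption argument, which is the delicate part of the calculation.
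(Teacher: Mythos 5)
First, note that the paper itself gives no proof of this lemma: it is quoted verbatim from Nilssen, Tai and Winther \cite{NTR}, so your attempt can only be measured against the argument in that reference. Your overall architecture is the right one and matches theirs. The identity $|u-u^0|_1^2+\epsilon^2|u|_2^2=(f,u^0-u)$ is correct, and your mechanism for extracting the extra power of $\epsilon$ --- using $\Delta(u^0-u)=-\epsilon^2\Delta^2u$ and repeated integration by parts to rewrite $(f,u^0-u)=-\epsilon^2(f,\Delta u)-\epsilon^2\int_{\partial\Omega}\Delta u\,\frac{\partial u^0}{\partial n}\,ds$, then applying the multiplicative trace inequality to $\Delta u$ --- is exactly what produces the $\epsilon^{1/2}$ gain.

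The genuine gap is your third estimate, and it is not a detail: $|u|_3$ enters the right-hand side of your energy identity through $\|\Delta u\|_{L^2(\partial\Omega)}\lesssim\|\Delta u\|_0^{1/2}\|\Delta u\|_1^{1/2}$, so without a usable bound on $|u|_3$ the simultaneous absorption you describe cannot close. Your proposed route --- read the equation as $-\Delta u=f-\epsilon^2\Delta^2u$ and invoke ``convex-domain $H^3$-regularity for Poisson, bootstrapping from $|u|_2$'' --- fails twice over: $H^3$ regularity for the Dirichlet Laplacian requires the source term in $H^1$ (and does not hold on a general convex polygon anyway), while here the source contains $\Delta^2u$, which is merely in $L^2$ and is not controlled by $|u|_2$; the bootstrap is circular. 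The correct step is $H^3$ regularity of the clamped \emph{biharmonic} problem on convex domains, $\|u\|_3\lesssim\|\Delta^2u\|_{-1}$, combined with the observation that for every $v\in H^1_0(\Omega)$,
$$\epsilon^2(\Delta^2u,v)=(f,v)-b(u,v)=b(u^0-u,v)\le|u-u^0|_1\,|v|_1,$$
where the boundary term in Green's formula vanishes because $\partial u/\partial n=0$ on $\partial\Omega$; hence $|u|_3\lesssim\epsilon^{-2}|u-u^0|_1$. Feeding this back into your boundary term gives $\epsilon^2\|f\|_0|u|_2^{1/2}|u|_3^{1/2}\lesssim\epsilon\|f\|_0|u|_2^{1/2}|u-u^0|_1^{1/2}$, which Young's inequality absorbs into the left-hand side to yield $\epsilon^2|u|_2^2+|u-u^0|_1^2\lesssim\epsilon\|f\|_0^2$; the bound $\epsilon^{3/2}|u|_3\lesssim\|f\|_0$ then follows a posteriori from $|u|_3\lesssim\epsilon^{-2}|u-u^0|_1$. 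With that replacement your proof closes; as written, it does not.
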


By this lemma, we have the following uniform result:
\begin{thm}\label{theorem4}
 Let $u$ and $u_h$ be the  solutions to the problems (\ref{continuous weak
formula}) and (\ref{discrete weak formula}) respectively. Assume the assumptions (H1)-(H4) hold true. Then it holds the following uniform error estimate
\begin{equation}\label{uniformestimate}
\parallel \!u-u_h \!\parallel_{\epsilon,h} \lesssim h^{1/2}\parallel \!f\!\parallel_0.
\end{equation}
\end{thm}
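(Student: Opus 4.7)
The plan is to split by cases $\epsilon\ge h$ and $\epsilon<h$, since a direct substitution of Lemma \ref{uniformlemma} into Theorem \ref{theorem1} only yields the non-uniform bound $h\epsilon^{-1/2}\|f\|_0$. In the easy case $\epsilon\ge h$, substitute $|u|_2\lesssim\epsilon^{-1/2}\|f\|_0$ and $\epsilon|u|_3\lesssim\epsilon^{-1/2}\|f\|_0$ from Lemma \ref{uniformlemma} into \eqref{theorem1inequality1}; since $h\epsilon^{-1/2}=h^{1/2}(h/\epsilon)^{1/2}\le h^{1/2}$ in this range, the target estimate follows immediately.

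In the harder case $\epsilon<h$, the comparison pivots on the reduced Poisson solution $u^0$ of Lemma \ref{uniformlemma}, which satisfies $|u^0|_2\lesssim\|f\|_0$ by elliptic regularity on the convex domain. I would use the triangle inequality
\[
\|u-u_h\|_{\epsilon,h}\le\|u-u^0\|_{\epsilon,h}+\|u^0-\tilde v_h\|_{\epsilon,h}+\|\tilde v_h-u_h\|_{\epsilon,h},
\]
where $\tilde v_h\in V_h$ is a suitable interpolant of $u^0$ (the canonical interpolant $I_h u^0$ with boundary degrees of freedom adjusted, since $u^0\notin H_0^2$). The first term is controlled by Lemma \ref{uniformlemma}: the $b_h$-part is $|u-u^0|_1\lesssim\epsilon^{1/2}\|f\|_0$, and the $a_h$-part is $\epsilon|u-u^0|_2\le\epsilon(|u|_2+|u^0|_2)\lesssim\epsilon^{1/2}\|f\|_0$, both $\le h^{1/2}\|f\|_0$ since $\epsilon<h$. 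The second term is $\lesssim h\|f\|_0$ by (H1) and \eqref{interpolation error}, up to a boundary-correction of the same order confined to a strip of width $h$ along $\partial\Omega$.

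For the decisive term $\|\tilde v_h-u_h\|_{\epsilon,h}$, set $w_h=\tilde v_h-u_h\in V_h$ and use \eqref{discrete weak formula} to obtain
\[
\|w_h\|_{\epsilon,h}^2=\epsilon^2 a_h(\tilde v_h,w_h)+b_h(\tilde v_h-u^0,w_h)+\bigl(b_h(u^0,w_h)-(f,w_h)\bigr).
\]
The first two summands contribute $\epsilon\|f\|_0\|w_h\|_{\epsilon,h}$ and $h\|f\|_0\|w_h\|_{\epsilon,h}$ respectively via Cauchy--Schwarz, $|\tilde v_h|_{2,h}\lesssim|u^0|_2$, and the interpolation estimate. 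The third summand is a Poisson-type consistency mismatch; substituting $-\Delta u^0=f$ and inserting the auxiliary function $\Pi w_h\in H_0^1(\Omega)$ recasts it as
\[
b_h(u^0,w_h)-(f,w_h)=\sum_{T\in\mathcal T_h}\int_T\nabla u^0\cdot\nabla(w_h-\Pi w_h)+(f,\Pi w_h-w_h),
\]
which is then handled by exactly the $J_3$ and $J_4$ mechanism of \eqref{consist}: subtract the piecewise mean $\Pi_0\nabla u^0$ using (H4) and apply Bramble--Hilbert, then use the Poincar\'{e}-type bound $\|w_h-\Pi w_h\|_{0,T}\lesssim h|w_h|_{1,T}$ afforded by the vertex agreement of $\Pi w_h$ with $w_h$. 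This contributes $h\|f\|_0\|w_h\|_{\epsilon,h}$, so dividing yields $\|\tilde v_h-u_h\|_{\epsilon,h}\lesssim(\epsilon+h)\|f\|_0\le h^{1/2}\|f\|_0$, completing the argument in Case 2.

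The principal obstacle I anticipate is the consistency mismatch $b_h(u^0,w_h)-(f,w_h)$ in the nonconforming setting $V_h\not\subset H_0^1(\Omega)$, which is rescued only by the auxiliary hypothesis (H4); when $V_h\subset H_0^1(\Omega)$ (the extended high order element) the choice $\Pi=\mathbf{I}$ makes this term vanish identically and the argument is considerably cleaner. The minor technicality of using a boundary-adjusted interpolant $\tilde v_h$ is benign, since the adjustment is supported in an $h$-wide strip along $\partial\Omega$ and costs only $O(h\|f\|_0)$.
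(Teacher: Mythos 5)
Your argument is essentially correct but follows a genuinely different route from the paper. The paper does not split into the cases $\epsilon\ge h$ and $\epsilon<h$: it runs the second Strang lemma once, bounding the approximation term via the fractional interpolation estimate $\epsilon\|u-I_hu\|_2\lesssim \epsilon h^{1/2}|u|_2^{1/2}|u|_3^{1/2}=h^{1/2}(\epsilon^{1/2}|u|_2)^{1/2}(\epsilon^{3/2}|u|_3)^{1/2}$ and the consistency terms $J_1,\dots,J_4$ of (\ref{consist}) via the second branches of (\ref{consistencyinequality3}) and (\ref{consistencyinequality4}), so that the powers of $\epsilon$ exactly cancel against the weighted regularity (\ref{lem4inequality2}); the splitting $u=(u-u^0)+u^0$ is used only inside the estimates of $\|u-I_hu\|_1$ and $J_3$. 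Your Case~1 is a clean shortcut through Theorem \ref{theorem1}, and your Case~2 replaces the consistency analysis of the fourth-order form by a perturbation argument relative to the reduced Poisson solution plus a discrete stability (coercivity) estimate for $\|\tilde v_h-u_h\|_{\epsilon,h}$; the identity you derive for $b_h(u^0,w_h)-(f,w_h)$ and its treatment via (H4), $\Pi_0\nabla u^0$ and $\|w_h-\Pi w_h\|_{0,T}\lesssim h|w_h|_{1,T}$ is exactly the $J_3+J_4$ mechanism of the paper applied to $u^0$ instead of $u$. What your route buys is that it avoids all fractional-power interpolation bounds; what it costs is the case split and the need for an interpolant of $u^0$.

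The one place you are too quick is the boundary-adjusted interpolant $\tilde v_h$. Since $\partial u^0/\partial n$ does \emph{not} vanish on $\partial\Omega$, zeroing the boundary normal-derivative degrees of freedom produces a correction whose coefficients on a boundary element $T$ scale like $h^{-1}\|\nabla u^0\|_{0,T}+|u^0|_{2,T}$; summing over the boundary strip $S_h$ and using $\|\nabla u^0\|_{0,S_h}\lesssim h^{1/2}\|\nabla u^0\|_0^{1/2}\|\nabla u^0\|_1^{1/2}\lesssim h^{1/2}\|f\|_0$, the correction contributes $O(h^{1/2}\|f\|_0)$ to $\|u^0-\tilde v_h\|_{\epsilon,h}$ and to $|\tilde v_h-u^0|_{1,h}$, not the $O(h\|f\|_0)$ you assert. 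This does not break the proof --- $h^{1/2}\|f\|_0$ is precisely the target order --- but it should be stated and justified, since it is the term that actually determines the $h^{1/2}$ rate in your version of the argument.
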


\begin{proof}
By the interpolation estimates (\ref{interpolation error}) and the regularity result (\ref{lem4inequality2}), we obtain
$$
\epsilon \|u- I_h u\|_2 \lesssim\epsilon |u|_2^{1/2}\|u- I_h u\|_2^{1/2} \lesssim\epsilon
h^{1/2}|u|_2^{1/2}|u|_3^{1/2}
\lesssim h^{1/2}\|f\|_0$$
and
\begin{equation*}
\begin{aligned}
\|u- I_h u\|_1  & \lesssim \|u-u^0-I_h(u-u^0)\|_1+\|u^0-I_h u^0\|_1\\
&\lesssim h^{1/2}(\epsilon^{-1/2} |u-u^0|_1)^{1/2}(\epsilon^{1/2}|u-u^0|_2)^{1/2}+ h |u^0|_2\\
& \lesssim h^{1/2}\|f\|_0.
\end{aligned}
\end{equation*}
These two inequalities yield   the  estimate of the approximation term,
\begin{equation}\label{thm4inequality2}
\inf \limits_{v \in V_{h}}\|u-v_h\|_{\epsilon,h}\lesssim \|u-I_h u\|_{\epsilon,h} \lesssim
h^{1/2}\|f\|_0.
\end{equation}
By Lemma \ref{strang lemma}, the only thing left is to estimate   the consistency error $E_{\epsilon,h}(u,w_h)=J_1+J_2+J_3+J_4$, where $J_i$ are defined in (\ref{consist}).  From (\ref{consistencyinequality3}), (\ref{consistencyinequality4}) and (\ref{lem4inequality2}), there hold
\begin{equation}\label{thm4inequality3}
J_1 \lesssim h^{1/2}\epsilon |u|_2^{1/2}|u|_3^{1/2}\|w_h\|_{\epsilon,h} \lesssim
h^{1/2}\|f\|_0\|w_h\|_{\epsilon,h},
\end{equation}
\begin{equation}\label{thm4inequality4}
J_2\lesssim h^{1/2}\epsilon^{3/2}|u|_3\|w_h\|_{\epsilon,h} \lesssim
h^{1/2}\|f\|_0\|w_h\|_{\epsilon,h}.
\end{equation}
For the term $J_3$, by the assumption (H4), standard interpolation theory and  (\ref{lem4inequality2}), we have
\begin{equation}\label{thm4inequality5}
\begin{aligned}
J_3&=\sum_{T \in \mathcal {T}_h} \int_T \nabla(u-u^0)
\cdot\nabla(w_h-\Pi w_h)+\sum_{T \in\mathcal{T}_h} \int_T (\nabla u^0-\Pi_0 \nabla u^0) \cdot \nabla(w_h-\Pi w_h) \\
&\lesssim h^{1/2}\!\!\sum_{T \in \mathcal {T}_h} |u-u^0|_{1,T}|w_h|_{1,T}^{1/2}|w_h|_{2,T}^{1/2}+ h\sum_{T \in \mathcal {T}_h}  |u^0|_{2,T} |w_h|_{1,T}\\
&\lesssim h^{1/2}\epsilon^{-1/2}|u-u^0|_1\|w_h\|_{\epsilon,h}+h|u^0|_2\|w_h\|_{\epsilon,h}\\
&\lesssim
h^{1/2}\|f\|_0\|w_h\|_{\epsilon,h},
\end{aligned}
\end{equation}
where the operator $\Pi_0$ is the same as in (\ref{consistencyinequality5}).
Finally,  the above estimates (\ref{thm4inequality2})-(\ref {thm4inequality5}), together with  (\ref{consistencyinequality6}), indicate   the desired uniform estimate
(\ref{uniformestimate}).
\end{proof}

\begin{coro}\label{morleyuniformlemma}
The rectangular Morley element and the extended high order rectangular Morley element are uniformly
convergent when applied to the problem (\ref{pde}), in a sense that   the uniform error estimate (\ref{uniformestimate}) holds true.
\end{coro}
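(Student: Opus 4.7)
The plan is simply to reduce the corollary to Theorem \ref{theorem4} by verifying that both elements satisfy the four structural assumptions (H1)--(H4) used in that theorem. Once this verification is in place the uniform estimate (\ref{uniformestimate}) follows by a direct invocation of Theorem \ref{theorem4}, with no additional work.

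First I would handle the rectangular Morley space $V_h=M_h$. Assumption (H1) is immediate from $P_2(T)\subset P_T = P_2(T)+\mathrm{span}\{x^3,y^3\}$ together with the $P_T$-unisolvence of the degrees of freedom, which fixes the nodal interpolant on $P_2(T)$; (H2) and (H3) are built directly into the definition of $M_h$ through the continuity and boundary conditions at vertices and on $\int_e \partial_n v_h\,ds$; and (H4) has just been established in the lemma preceding the corollary, for the bilinear interpolation operator $\Pi$. Hence Theorem \ref{theorem4} applies verbatim and yields $\|u-u_h\|_{\epsilon,h}\lesssim h^{1/2}\|f\|_0$.

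Next I would treat the extended high order Morley space $V_h=M^E_h$. The text has already noted that (H1)--(H3) are immediate from the unisolvence shown in Lemma \ref{unisolventlemma} (which gives $Q_2(T)\supset P_2(T)$ locally and hence (H1)) together with the vertex/edge-midpoint continuity and the continuity of $\int_e \partial_n v_h\,ds$ that are built into the definition of $M^E_h$. The key observation for this element is that it is $C^0$, so $M^E_h\subset H^1_0(\Omega)$ and we are free to choose $\Pi=\mathbf{I}$ as in (\ref{identity}); with this choice (H4) reduces to the trivial identity $\nabla(v_h-v_h)=0$. Thus (H1)--(H4) hold and Theorem \ref{theorem4} again delivers (\ref{uniformestimate}).

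Strictly speaking there is nothing hard in this corollary: the heavy lifting is in Theorem \ref{theorem4} (which in turn uses the refined regularity of Lemma \ref{uniformlemma} to kill the $\epsilon$-blowup of $|u|_2,|u|_3$). The only point that would require a brief comment is that, for the $C^0$ element $M^E_h$, assumption (H4) is satisfied in the degenerate way $\Pi=\mathbf{I}$, and in that case the terms $J_2,J_3,J_4$ in the consistency decomposition (\ref{consist}) vanish identically, so the proof of Theorem \ref{theorem4} specializes to bounding $J_1$ only---which is actually the sharper of the two cases already covered there. No other obstacle is anticipated.
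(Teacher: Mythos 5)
Your proposal is correct and follows exactly the route the paper intends: the corollary is a direct consequence of Theorem \ref{theorem4} once (H1)--(H4) are checked, which the paper has already done for $M_h$ (including the lemma giving (H4) for the bilinear $\Pi$) and for $M_h^E$ (where $M_h^E\subset H^1_0(\Omega)$ makes $\Pi=\mathbf{I}$ and renders (H4) vacuous). No gaps.
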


\section{Numerical results}
In this section, we will show some numerical results of  the rectangular Morley element
and the extended high order rectangular Morley element.
\subsection{An example without boundary layers}

Let $\Omega=[0,1]\times [0,1]$ and $u(x_1,x_2)=\sin(\pi x_1)^2\sin(\pi x_2)^2$. For $\epsilon \geq 0$,
set $f=\epsilon^2\Delta^2{u}-\Delta{u}$. Then $u$ is the solution to the  problem (\ref{pde}) when
$\epsilon > 0$. The domain $\Omega$ is divided into $n^2$ squares of size $h\times h$, with $h=1/n$.

In   tables 1-2 we have listed the relative error in the energy norm, $\|u-u_h
\|_{\epsilon,h} / \|u\|_{\epsilon,h}$ for different values of $\epsilon, h$. For  comparison we
also consider the case $\epsilon=0$, i.e, the Poisson's problem with Dirichlet boundary conditions,
and the biharmonic problem $\Delta^2{u}=f$.

From the numerical results we can conclude that the rectangular Morley element and the extended high order
rectangular Morley element both converge for all $\epsilon \in [0,1]$. More precisely, for the
extended high order rectangular Morley element, the results show that relative energy error is
linear with respect to $h$ when $\epsilon$ is large while it is quadratic when  $\epsilon$ is small.
But the rectangular Morley element can only ensures linear convergence rate.  These are conformable to
  our theoretical results (\ref{Morleyerror}) and (\ref{theorem1inequality2}).

\subsection{An example with boundary layers}
We consider an example to verify the theoretical analysis for boundary layers.  Let  $\Omega=[0,1]\times [0,1]$ and
$u(x_1,x_2)=\epsilon(e^{-x_1/\epsilon}+e^{-x_2/\epsilon})-x_1^2x_2$, $f=2x_2$, and we assume the
Dirchlet and Neumann boundary condition  holds.

We   computed the relative error in the energy norm for various values of $\epsilon $ and $h$ by using
the  rectangular Morley element and the extended high order rectangular Morley element. From the
computational results   listed in tables 3-4, we can see that the two elements both ensure 1/2 order convergence as $\epsilon\rightarrow 0$.
This is conformable to the
theoretical result (\ref{uniformestimate}).

\begin{center}
\vspace{0.3cm} Table 1: The rectangular Morley element \vspace{0.1cm}
\begin{tabular}{|c|c|c|c|c|c|}\hline\
\backslashbox{$\epsilon$}{$h$}& $2^{-2}$& $2^{-3}$& $2^{-4}$ & $2^{-5}$&rate \\
\hline
\hline $2^{0}$    & 0.3899    & 0.1944   & 0.0972   & 0.0486 & 1.00  \\
\hline $2^{-2}$   & 0.3629    & 0.1741   & 0.0862   & 0.0430 & 1.03  \\
\hline $2^{-4}$   & 0.3166    & 0.1020   & 0.0431   & 0.0206 & 1.31  \\
\hline $2^{-6}$   & 0.4165    & 0.1197   & 0.0240   & 0.0070 & 1.96  \\
\hline $2^{-8}$   & 0.4442    & 0.2055   & 0.0544   & 0.0084 & 1.91  \\
\hline $2^{-10}$  & 0.4463    & 0.2243   & 0.1024   & 0.0265 & 1.36  \\
\hline Poisson    & 0.4464    & 0.2258   & 0.1132   & 0.0567 & 0.99  \\
\hline Biharmonic & 0.3923    & 0.1961   & 0.0981   & 0.0491 & 1.00  \\
\hline
\end{tabular}

\vspace{0.3cm}

Table 2: The extended high order rectangular Morley element

\vspace{0.1cm}
\begin{tabular}{|c|c|c|c|c|c|}\hline\
\backslashbox{$\epsilon$}{$h$}& $2^{-2}$& $2^{-3}$& $2^{-4}$ &$2^{-5}$  &rate\\
\hline
\hline $2^{0}$    & 0.2469    & 0.1233   & 0.0615   & 0.0307  & 1.00 \\
\hline $2^{-2}$   & 0.2209    & 0.1093   & 0.0544   & 0.0271  & 1.01 \\
\hline $2^{-4}$   & 0.1154    & 0.0530   & 0.0258   & 0.0128  & 1.06 \\
\hline $2^{-6}$   & 0.0564    & 0.0187   & 0.0077   & 0.0036  & 1.33 \\
\hline $2^{-8}$   & 0.0488    & 0.0126   & 0.0035   & 0.0012  & 1.79 \\
\hline $2^{-10}$  & 0.0483    & 0.0121   & 0.0031   & 0.0008  & 1.97 \\
\hline Poisson    & 0.0482    & 0.0121   & 0.0031   & 0.0008  & 1.99 \\
\hline Biharmonic & 0.2510    & 0.1253   & 0.0625   & 0.0312  & 1.00 \\
\hline
\end{tabular}
\end{center}

\begin{center}
\vspace{0.3cm} 

Table 3: The rectangular Morley element

\vspace{0.1cm}
\begin{tabular}{|c|c|c|c|c|c|}\hline\
\backslashbox{$\epsilon$}{$h$}& $2^{-2}$& $2^{-3}$& $2^{-4}$ & $2^{-5}$&rate \\
\hline
\hline $2^{0}$    & 0.1052    & 0.0514   & 0.0255   & 0.0127 & 1.02  \\
\hline $2^{-2}$   & 0.0554    & 0.0259   & 0.0127   & 0.0063 & 1.05  \\
\hline $2^{-4}$   & 0.0913    & 0.0344   & 0.0106   & 0.0033 & 1.60  \\
\hline $2^{-6}$   & 0.2353    & 0.1070   & 0.0485   & 0.0182 & 1.23  \\
\hline $2^{-8}$   & 0.3065    & 0.2089   & 0.1184   & 0.0543 & 0.83  \\
\hline $2^{-10}$  & 0.3068    & 0.2162   & 0.1525   & 0.1041 & 0.52  \\
\hline
\end{tabular}

\vspace{0.3cm} 

Table 4: The extended high order rectangular Morley element

\vspace{0.1cm}
\begin{tabular}{|c|c|c|c|c|c|}\hline\
\backslashbox{$\epsilon$}{$h$}& $2^{-2}$& $2^{-3}$& $2^{-4}$ & $2^{-5}$ & rate \\
\hline
\hline $2^{0}$    & 0.0196    & 0.0097   & 0.0048   & 0.0024  & 1.01\\
\hline $2^{-2}$   & 0.0734    & 0.0366   & 0.0182   & 0.0091  & 1.01\\
\hline $2^{-4}$   & 0.1554    & 0.0921   & 0.0488   & 0.0247  & 0.88\\
\hline $2^{-6}$   & 0.2352    & 0.1286   & 0.0822   & 0.0496  & 0.75\\
\hline $2^{-8}$   & 0.2785    & 0.1907   & 0.1150   & 0.0641  & 0.71\\
\hline $2^{-10}$  & 0.2772    & 0.1917   & 0.1347   & 0.0937  & 0.52\\
\hline
\end{tabular}


\end{center}

\vspace{0.5cm}

\end{document}